\theoremstyle{plain}
\newtheorem{theorem}{Theorem}
\theoremstyle{definition}
\theoremstyle{plain}
\newtheorem{lemma}{Lemma}
\theoremstyle{plain}
\theoremstyle{plain}
\theoremstyle{plain}
\newtheorem{proposition}[theorem]{Proposition}
\theoremstyle{definition}
\title[Remarks on expansions of the real field]{Remarks on expansions of the real field: tameness, Hardy fields and smooth rings}
\author[R. F.]{Rodrigo Figueiredo}
\address{Departamento de Matem\'atica, Instituto de Matem\'atica e Estat\'is\-tica, Universidade de S\~ao Paulo, Rua do Mat\~ao 1010, CEP 05508-090, S\~ao Paulo, SP, BRAZIL.}
\email{rodrigof@ime.usp.br}
\author [H. L. M.]{Hugo Luiz Mariano}
\address{Departamento de Matem\'atica, Instituto de Matem\'atica e Estat\'is\-tica, Universidade de S\~ao Paulo, Rua do Mat\~ao 1010, CEP 05508-090, S\~ao Paulo, SP, BRAZIL.}
\email{hugomar@ime.usp.br}
\date{February 2019}
\begin{document}

\maketitle

\begin{abstract}
In the talk	\cite{vandendries-matthias-} presented at the Logic and Foundations section of ICM-2018, Rio de Janeiro, the authors analyze, under a model-theoretic perspective, three ways  to enrich the real continuum by infinitesimal and infinite quantities.
In the present work, we establish a first model-theoretic connection of another (but related to the previous one) triple of structures: o-minimal structures, Hardy fields and smooth rings.
\end{abstract}

%\keywords{o-minimality, Hardy fields, $\mathcal{C}^\infty$-rings, H-fields}

%%%%%%%%%%%% Content of the article %%%%%%%%%%%%

\section*{Introduction}

According Matthias Aschenbrenner, Lou van den Dries and Joris van der Hoeven (\cite{vandendries-matthias-}):

{{\em ``Germs of real-valued functions, surreal numbers, and transseries are three ways
to enrich the real continuum by infinitesimal and infinite quantities. Each of these
comes with naturally interacting notions of ordering and derivative''.}}

They examine this tripod by the model-theoretic analysis of the category of H-fields, which provides a common framework for these structures.

In the present (short) work, we give some model-theoretic connections among the elements of another tripod, although related to the previous one: o-minimal structures, Hardy fields and smooth rings. 

{\bf Overview of the paper}. In the first section we present the preliminary definitions and  results on o-minimal structures, Hardy fields and $\mathcal{C}^{\infty}$-rings needed in the sequel. Section 2 presents the connections among the elements of the concerned tripod. A final section is devoted to the sketch of possible future works around this subject. In order to write a (brief though) reader-friendly text, we also include an appendix containing some basic results on extensions of real  smooth functions.

\section{Preliminaries}

For the reader's convenience we provide below a simplified account on the three subjects pointed out in the title of this paper. 

\subsection{O-minimality and definability}

It is well-known that the theory of algebraically closed fields is \textit{strongly minimal}, i.e. the definable unary subsets of an algebraically closed field are finite or cofinite: this is a direct consequence of the elimination of quantifiers. 

Analogously, since the theory of real closed fields in the language of the ordered rings admits quantifier elimination, it is an \textit{o-minimal theory}, i.e. the first-order definable unary subsets of a real closed field are finite unions of points and open intervals. The corresponding topology generated over finite cartesian products of definable sets is well-behaved or ``tame''. For a full treatment of o-minimal structures from a geometric viewpoint, see \cite{vandendries-tame} and \cite{vandendries-miller1996}.

Some variants of the notion of o-minimality have been studied since its systematization in the middle of the 1980s. We recall from \cite{fornasiero-servi}, for instance, that a sequence $\mathcal{S}\mathrel{\mathop:}=(\mathcal{S}_n)_{n\geq 1}$, where each $\mathcal{S}_n$ is a collection of subsets of $\mathbb{R}^n$, is called a \textit{weak structure} over the real field if, for all $m,n\geq 1$, the following conditions are satisfied:
\begin{enumerate}
    \item[(WS1)] if $A,B\in \mathcal{S}_m$, then $A\cap B\in \mathcal{S}_m$;
    \item[(WS2)] $\mathcal{S}_m$ contains all zero-sets of polynomials in $\mathbb{R}[X_1,\ldots,X_m]$;
    \item[(WS3)] if $A\in \mathcal{S}_m$ and $B\in \mathcal{S}_n$, then $A\times B\in \mathcal{S}_{m+n}$;
    \item[(WS4)] $\mathcal{S}_m$ is closed under permutation of the variables.
\end{enumerate}
If, in addition, the elements in $\mathcal{S}_1$ are just finite unions of connected components of $\mathbb{R}$, then $\mathcal{S}$ is said to be an \textit{o-minimal} weak structure over the real field.

\subsection{Hardy fields}

Let $f : X \to \mathbb{R}$ and $g : Y \to \mathbb{R}$ be continuous real functions,  where the subsets $X, Y \subseteq \mathbb{R}$  contain an open interval of the form $(c, +\infty)$. These functions are said to have the same \textit{germ at the infinity} (shortly, \textit{germ at $+\infty$}), denoted $[f]=[g]$, if they agree on some open interval $(a,+\infty) \subseteq X \cap Y$. Clearly, this determines an equivalence relation on the set of such functions. The formed quotient set $Q$ supports a natural structure of commutative unitary ring of characteristic zero, with pointwise defined addition and multiplication of germs at infinity. Moreover, this structure can be enriched by two binary relations: $\leq$, the pointwise defined partial ordering; and $\preceq$, the preorder of dominance described as follows. We write $[f] \preceq [g]$ and say that $[f]$ \textit{dominates} $[g]$ iff there exists $b >0$ such that $|f(x)| \leq b|g(x)|$ \textit{eventually}, i.e., there exists $d \in \mathbb{R}$ such that $(d,+\infty) \subseteq X \cap Y$ and $|f(x)| \leq b|g(x)|$ holds for each $x > d$. Two such continuous functions $f, g$ have the same \textit{order of growth at the infinity} when $[f] \preceq [g]$ and $[g] \preceq [f]$.

A subfield $F$ of the ring $Q$ is a {\em Hardy field} when it is closed under differentiation, i.e. if $[f] \in F$, then $[f'] \in F$. There is an interesting class of  first-order structures, the class of {\em H-fields}, formed by ordered differential fields satisfying some further conditions, which includes all Hardy fields expanding $\mathbb{R}$.

There is a strong relationship between Hardy fields and o-minimal structures brought by Chris Miller, for instance, in \cite{miller} (see Proposition \ref{4} below). A striking result (\cite{miller0}) afforded by the combination of these two kinds of structures is the dichotomy, also obtained by Chris Miller, for o-minimal expansions of the real field: either they are polinomially bounded or define the exponential function.

We finish this subsection by asserting the following technical result on extensions of smooth functions defined on open subsets of the real line, which is an easy consequence of the smooth version of Tietze extension theorem (see Appendix A for more details).

\begin{proposition}\label{3}
For any $\mathcal{C}^\infty$ function $g\colon (b,+\infty)\to \mathbb{R}$ with $b\in \mathbb{R}$ and for each $c > b$, there exists a $\mathcal{C}^\infty$ function $\widetilde{g}\colon \mathbb{R}\to \mathbb{R}$ such that $\widetilde{g}=g$ on $(c,+\infty)$.  
\end{proposition}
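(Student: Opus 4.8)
The plan is to extend $g$ to all of $\mathbb{R}$ by using a smooth cutoff function to transition between $g$ and the zero function on the region to the left of the domain of $g$, while keeping $g$ unchanged on $(c,+\infty)$.

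\medskip

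First I would fix a point $b'$ with $b < b' < c$ and choose a smooth bump (cutoff) function $\varphi\colon \mathbb{R}\to\mathbb{R}$ such that $\varphi\equiv 0$ on $(-\infty,b']$ and $\varphi\equiv 1$ on $[c,+\infty)$, with $0\leq\varphi\leq 1$ everywhere; such a function exists by the standard construction from the classical smooth bump $t\mapsto e^{-1/t}$ for $t>0$ (extended by $0$ for $t\leq 0$). Next I would define
\[
\widetilde{g}(x) \mathrel{\mathop:}=
\begin{cases}
\varphi(x)\, g(x), & x > b',\\
0, & x \leq b'.
\end{cases}
\]
On the overlap region $(b',c)$ both $g$ and $\varphi$ are smooth, so the product $\varphi\cdot g$ is smooth there; on $(c,+\infty)$ we have $\widetilde{g}=g$ since $\varphi\equiv 1$; and on $(-\infty,b']$ the function is identically $0$, hence smooth. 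The only point requiring care is the gluing at $x=b'$.

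\medskip

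The main obstacle is verifying that $\widetilde{g}$ is genuinely $\mathcal{C}^\infty$ at the junction $x=b'$. The key observation is that because $\varphi$ vanishes identically on a whole neighborhood to the left of $b'$ (indeed on all of $(-\infty,b']$) and the product $\varphi\cdot g$ therefore vanishes on an open interval $(b',b')$'s left neighborhood is automatic, I would instead note that $\varphi\equiv 0$ on $(b,b')$ as well if I shrink the transition region, so that $\widetilde{g}$ is identically zero on an open interval around $b'$ from the left and equals the smooth function $\varphi g$ from the right where $\varphi g$ is also zero near $b'$. Concretely, by choosing $\varphi$ to vanish on $(-\infty, b'']$ for some $b'' $ strictly between $b'$ and the start of its support, all derivatives of $\widetilde{g}$ from both sides at $b'$ are zero, matching the Leibniz-rule expansion of $(\varphi g)^{(n)}$ which contains $\varphi$ or one of its derivatives as a factor in every term. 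Thus all one-sided derivatives agree and equal $0$, giving $\widetilde{g}\in\mathcal{C}^\infty(\mathbb{R})$.

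\medskip

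Alternatively, and more cleanly, I would invoke the smooth Tietze extension theorem (stated in Appendix A) directly: the set $(c,+\infty)$ is closed in the open set $(b,+\infty)$ in a suitable sense, or rather I would first extend $g$ restricted to $[c,+\infty)$—a closed subset of $\mathbb{R}$—to a smooth function on $\mathbb{R}$ via the smooth version of Tietze, which immediately yields $\widetilde{g}=g$ on $(c,+\infty)$. This bypasses the explicit cutoff computation entirely and reduces the proposition to citing the appendix, so I expect the cleanest writeup to phrase the bump-function argument as the explicit realization of that extension theorem.
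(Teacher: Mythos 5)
Your proposal is correct, and your second (``cleaner'') route is literally the paper's proof: the paper sets $\widehat{g}$ to be the restriction $g|_{[c,+\infty)}$, observes that this is a $\mathcal{C}^\infty$ function on a closed subset of $\mathbb{R}$ in the sense of Appendix A (each point of $[c,+\infty)$ has the open neighborhood $(b,+\infty)$ on which $g$ itself is a smooth extension, using $c>b$), and applies the smooth Tietze extension theorem (Lemma \ref{2}) to obtain $\widetilde{g}$ with $\widetilde{g}=g$ on $[c,+\infty)\supseteq(c,+\infty)$. Your first, cutoff-function route is a correct and genuinely more elementary alternative: it avoids partitions of unity and Tietze entirely, needing only the bump function constructed at the start of Appendix A. One remark on that route: your analysis of the junction at $x=b'$ is tangled (the sentence about the left neighborhood of $b'$ does not parse), and in fact no derivative-matching or shrinking of the support of $\varphi$ is needed at all. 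Since $b<b'$, the product $\varphi g$ is defined and smooth on the whole open set $(b,+\infty)$, which already contains a neighborhood of $b'$; moreover $\widetilde{g}$ coincides with $\varphi g$ on $(b,+\infty)$ (both vanish on $(b,b']$ because $\varphi$ does) and with the zero function on the open set $(-\infty,b')$. These two open sets cover $\mathbb{R}$, so $\widetilde{g}$ is locally equal to a smooth function everywhere, hence $\mathcal{C}^\infty$, with no computation of one-sided derivatives. In comparing the two approaches: the cutoff argument buys a short, self-contained proof of exactly this proposition, while the paper's Tietze route buys generality --- it extends smooth functions off arbitrary closed subsets of $\mathbb{R}$, which is the machinery the appendix develops anyway --- so the paper gets Proposition \ref{3} as a one-line corollary.
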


\subsection[$\mathcal{C}^\infty$-rings]{$\boldsymbol{\mathcal{C}^\infty}$-rings}

Roughly speaking, a $\mathcal{C}^{\infty}$-ring is an $\mathbb{R}$-algebra satisfying additional conditions. The original motivation to introduce and study $\mathcal{C}^{\infty}$-rings was to construct topos-models for Synthetic Differential Geometry (see \cite{mr1}).

Precisely, a \textit{$\mathcal{C}^\infty$-ring} (or \textit{smooth ring}) is a set $A$ together with operations $\Phi_f\colon A^m\to A$ for all $m\geq 0$ and smooth functions $f\colon \mathbb{R}^m\to \mathbb{R}$, where by convention $A^0$ is the single point $\{\emptyset\}$. These operations must satisfy the conditions:
if $f_1,\ldots,f_n\colon\mathbb{R}^m\to \mathbb{R}$ and $g\colon \mathbb{R}^n\to \mathbb{R}$ are smooth functions, and $h\colon \mathbb{R}^m\to \mathbb{R}^n$ is given by 
$$
h(x_1,\ldots,x_m)\mathrel{\mathop:}=g(f_1(x_1,\ldots,x_m),\ldots,f_n(x_1,\ldots,x_m))
$$
then 
$$
\Phi_h(c_1,\ldots,c_m)=\Phi_g(\Phi_{f_1}(c_1,\ldots,c_m),\ldots,\Phi_{f_n}(c_1,\ldots,c_m)),
$$ 
for all $c_1,\ldots,c_m\in A$; for all $1\leq j\leq m$, $\Phi_{\pi_j}=\Pi_j$, where $\pi_j\colon \mathbb{R}^m\to \mathbb{R}$ and $\Pi_j\colon A^m\to A$ denote the projections onto the $j$th terms of $m$-tuples (see \cite{dj} for more details). 

In particular, since each real polynomial function is smooth, then every $\mathcal{C}^\infty$-ring is an $\mathbb{R}$-algebra. Since the theory of $\mathcal{C}^\infty$-rings is equational, the corresponding category admits many interesting constructions, particularly it has all (small) limits and colimits and each set $X$ freely generates a $\mathcal{C}^\infty$-ring, namely $F(X)\mathrel{\mathop:}= \text{colim}_{Y\subseteq_\text{fin} X} \mathcal{C}^\infty(\mathbb{R}^{Y}, \mathbb{R})$.

Every (non trivial) $\mathcal{C}^\infty$-ring $A$ is semi-real, in fact $ 1 + \sum A^2 \subseteq A^\times$.  In Theorem 2.10 in \cite{mr}, it is established that any \textit{$\mathcal{C}^\infty$-field} -- i.e. a $\mathcal{C}^\infty$-ring such that its underlying ring is a field -- is real closed\footnote{In fact, in Theorem 2.10' in \cite{mr} it is shown that every $\mathcal{C}^\infty$-field satisfies an even stronger condition: they are \textit{$\mathcal{C}^\infty$}-real closed.}. This suggests the search for connections between the areas of $\mathcal{C}^\infty$-rings and o-minimal structures.

\section{A first connection among o-mininal structures, Hardy fields and smooth rings}

For each $n\in \mathbb{N}$, let $\mathcal{C}^\infty(\mathbb{R}^n)$ denote the set of all smooth functions from $\mathbb{R}^n$ to $\mathbb{R}$, which is a commutative ring with unity when equipped with the usual pointwise operations, and let $\mathcal{F}=(\mathcal{F}_n)_{n\in \mathbb{N}}$ be a sequence with $\mathcal{F}_n\subseteq \mathcal{C}^\infty(\mathbb{R}^n)$.

Throughout this section, $\mathcal{A}$ designates the expansion of the ordered real field $(\mathbb{R},<,+,\cdot,0,1)$ by the set $\bigcup_{n\in \mathbb{N}}\mathcal{F}_n$.  Unless otherwise stated, by ``definable'' we mean ``definable in $\mathcal{A}$ with parameters from $\mathbb{R}$''.

\begin{proposition}[Proposition 3.1, \cite{miller}]\label{4}
If $\mathcal{R}$ is an expansion of the real field $\mathbb{R}$, then the following are equivalent:
\begin{enumerate}
    \item[(1)] $\mathcal{R}$ is o-minimal;
    \item[(2)] the germs at $+\infty$ of definable in $\mathcal{R}$ unary functions form a Hardy field;
    \item[(3)] every unary definable in $\mathcal{R}$ function is either eventually zero or eventually nonzero.
\end{enumerate}
\end{proposition}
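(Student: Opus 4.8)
The plan is to prove the cyclic chain of implications $(1)\Rightarrow(2)\Rightarrow(3)\Rightarrow(1)$, since this is the most economical route and each link uses a different feature of o-minimality. Throughout, ``definable'' means definable in $\mathcal{R}$, and I will pass freely between a unary definable function and the germ at $+\infty$ it determines, noting first that any unary definable $f$ is, by o-minimality, eventually defined, continuous, and either eventually constant or strictly monotone on a ray $(c,+\infty)$ — this is the Monotonicity Theorem, the basic structural fact I expect to lean on repeatedly.

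**For $(1)\Rightarrow(2)$**, I would first check that the germs at $+\infty$ of unary definable functions form a commutative ring $H$ with the pointwise operations; this is routine since the definable functions are closed under sum and product and each is eventually defined on a common ray. The substantive points are that $H$ is a \emph{field} and that it is \emph{closed under differentiation}. For the field property: given a definable $f$ whose germ is nonzero, o-minimality (via the Monotonicity Theorem) forces $f$ to be eventually nonzero — it cannot oscillate through $0$ infinitely often, as its zero set is a finite union of points and intervals, so eventually $f$ is either identically zero or never zero; if the germ is nonzero we are in the latter case, and then $1/f$ is definable and gives the inverse germ. For closure under differentiation: if $f$ is definable then $f$ is eventually $\mathcal{C}^1$ (indeed the derivative exists eventually by o-minimal smoothness/cell-decomposition results, and the difference-quotient defining $f'$ is itself definable), so $[f']\in H$. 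I expect the field property to require the most care, because it is exactly the place where ``o-minimal'' is used essentially rather than formally.

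**The implication $(2)\Rightarrow(3)$ is essentially immediate:** if the germs form a Hardy field, then in particular they form an integral domain in which every nonzero germ is invertible, so a definable unary $f$ either has zero germ (i.e.\ is eventually zero) or has a germ that is a unit, which forces $f$ to be eventually nonzero. **The interesting direction is $(3)\Rightarrow(1)$,** which I would prove by contraposition: assuming $\mathcal{R}$ is \emph{not} o-minimal, I produce a definable unary function that is neither eventually zero nor eventually nonzero. Non-o-minimality gives a definable $X\subseteq\mathbb{R}$ that is not a finite union of points and intervals, hence has a boundary point $a$ that is a two-sided accumulation point of both $X$ and its complement — so $X\cap(a,+\infty)$ alternates infinitely near $a$. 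The trick is to transport this bad behaviour from a finite point $a$ out to $+\infty$: compose with the definable homeomorphism $x\mapsto a+1/x$ (or $x\mapsto a+e^{-x}$ if available) to obtain a definable set $Y$ whose characteristic behaviour oscillates as $x\to+\infty$, and then take $f=\chi_Y$ — or, to stay within honest definable functions rather than the indicator, take a definable $f$ (e.g.\ built from the distance-to-$Y$ function) that vanishes on $Y$ and is positive off $Y$, so that $f$ is eventually neither zero nor nonzero.

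**The main obstacle I anticipate** is the field-closure step inside $(1)\Rightarrow(2)$, specifically justifying that a definable function is eventually differentiable with definable derivative; this is where one genuinely invokes the $\mathcal{C}^1$ cell-decomposition available in o-minimal structures rather than a soft argument, and it is the only place where more than the Monotonicity Theorem is needed. The translation step in $(3)\Rightarrow(1)$ is conceptually the cleverest part but technically light once one fixes the right definable change of variables sending a finite accumulation point to $+\infty$.
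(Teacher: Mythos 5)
First, a caveat: the paper does not prove this proposition at all --- it is imported verbatim as Proposition 3.1 of Miller's notes \cite{miller} and then used as a black box in Theorems \ref{5} and \ref{6}. So there is no ``paper's proof'' to compare against, and I can only judge your argument on its own terms. Your cyclic structure $(1)\Rightarrow(2)\Rightarrow(3)\Rightarrow(1)$ is reasonable, and the first two links are sound: in $(1)\Rightarrow(2)$ you correctly isolate the two substantive points (invertibility of nonzero germs, via the fact that the zero set of a definable unary function is definable, hence a finite union of points and intervals, hence either contains a ray or is bounded above; and closure under differentiation, via eventual $\mathcal{C}^1$-ness of definable unary functions together with definability of the difference-quotient limit), and $(2)\Rightarrow(3)$ is indeed immediate. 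One small point: with the paper's definition of Hardy field (a subfield of the ring of germs of \emph{continuous} functions on rays, closed under differentiation), you also need definable unary functions to be eventually continuous; your appeal to the Monotonicity Theorem covers this, but it is worth saying explicitly.

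The genuine gap is in $(3)\Rightarrow(1)$. You assert that a definable $X\subseteq\mathbb{R}$ which is not a finite union of points and intervals ``has a boundary point $a$ that is a two-sided accumulation point of both $X$ and its complement.'' That is false in general: take $X=\bigcup_{n\geq 0}(2n,2n+1)$, which is definable in, e.g., $(\mathbb{R},<,+,\cdot,\sin)$ as $\{x>0 : \sin(\pi x)>0\}$ and is certainly not a finite union of points and intervals. Its boundary is $\{0,1,2,\dots\}$, which is infinite but discrete and closed, so it has no accumulation point in $\mathbb{R}$, and your transport map $x\mapsto a+1/x$ has nothing to latch onto. What is actually available is a dichotomy: since $\partial X$ is infinite (finite boundary forces $X$ to be a finite union of points and intervals), either (i) $\partial X$ is unbounded above or below, or (ii) $\partial X$ is bounded and has a finite accumulation point $a$ --- and even then the accumulation may be one-sided, so you must choose between $x\mapsto a+1/x$ and $x\mapsto a-1/x$. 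In case (i) no transport is needed at all: the characteristic function of $X$ (or of its reflection $-X$) is already neither eventually zero nor eventually nonzero. So your strategy survives, but only after a case split your write-up skips entirely, and the case you skipped is precisely the one in which the accumulation point you rely on does not exist.
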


\begin{theorem}\label{5}
If $\mathcal{A}$ is o-minimal, then the commutative ring (with unity) $\mathcal{H}_\mathcal{A}$ of germs at $+\infty$ of definable $\mathcal{C}^\infty$ unary functions is a Hardy field, and is isomorphic to a subfield of a $\mathcal{C}^\infty$-ring. 
\end{theorem}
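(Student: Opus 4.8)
The plan is to separate the statement into its two assertions and establish them in turn, leaning on Proposition \ref{4} for the first and on Proposition \ref{3} for the second. Throughout I regard $\mathcal{H}_\mathcal{A}$ as the set of germs at $+\infty$ that admit a representative which is simultaneously definable in $\mathcal{A}$ and of class $\mathcal{C}^\infty$ on some interval $(c,+\infty)$; this is a subset of the ring $Q$ of germs at $+\infty$ introduced above, and it carries the pointwise operations inherited from $Q$.

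First I would verify that $\mathcal{H}_\mathcal{A}$ is a Hardy field. It is plainly a subring of $Q$: sums and products of functions that are definable and eventually smooth are again definable and eventually smooth, and the constants $0,1$ arise from polynomial germs, so $\mathcal{H}_\mathcal{A}$ is a nontrivial commutative unital subring. To see that it is a field, take $[f]\in\mathcal{H}_\mathcal{A}$ with $[f]\neq 0$. Since $\mathcal{A}$ is o-minimal, Proposition \ref{4}(3) forces $f$ to be eventually nonzero, say on $(a,+\infty)$; intersecting with an interval on which $f$ is smooth, the reciprocal $1/f$ is definable and $\mathcal{C}^\infty$ there, so $[1/f]\in\mathcal{H}_\mathcal{A}$ is the required inverse. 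Finally, $\mathcal{H}_\mathcal{A}$ is closed under $[f]\mapsto[f']$: the derivative of a definable function is definable (it is pinned down by a first-order formula in $\mathcal{A}$ through difference quotients and limits), and the derivative of a smooth function is smooth, so $[f']\in\mathcal{H}_\mathcal{A}$. Thus $\mathcal{H}_\mathcal{A}$ is a subfield of $Q$ closed under differentiation, i.e. a Hardy field.

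For the second assertion I would exhibit an explicit $\mathcal{C}^\infty$-ring receiving $\mathcal{H}_\mathcal{A}$. Let $\mathfrak{a}\subseteq\mathcal{C}^\infty(\mathbb{R})$ be the ideal of smooth functions that vanish on some interval $(c,+\infty)$, and set $R\mathrel{\mathop:}=\mathcal{C}^\infty(\mathbb{R})/\mathfrak{a}$. Since $\mathcal{C}^\infty(\mathbb{R})$ is a $\mathcal{C}^\infty$-ring and the quotient of a $\mathcal{C}^\infty$-ring by any ring ideal is again a $\mathcal{C}^\infty$-ring, $R$ is a $\mathcal{C}^\infty$-ring; concretely it is the ring of germs at $+\infty$ of globally smooth functions. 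I then define $\varphi\colon\mathcal{H}_\mathcal{A}\to R$ by picking, for a germ $[g]$ with $g$ definable and smooth on $(b,+\infty)$, a global extension $\widetilde{g}\in\mathcal{C}^\infty(\mathbb{R})$ with $\widetilde{g}=g$ on some $(c,+\infty)$ — which exists by Proposition \ref{3} — and setting $\varphi([g])\mathrel{\mathop:}=\widetilde{g}+\mathfrak{a}$.

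The remaining work is to check that $\varphi$ is a well-defined injective ring homomorphism. Well-definedness is immediate, since two admissible representatives of $[g]$ and any two of their global extensions agree on a common interval $(c',+\infty)$ and hence differ by an element of $\mathfrak{a}$. The homomorphism property holds because an extension of $g_1+g_2$ (resp. $g_1g_2$) agrees near $+\infty$ with the sum (resp. product) of the chosen extensions, so the two differ by a member of $\mathfrak{a}$, while the germ of $1$ is sent to $1+\mathfrak{a}$. Injectivity is clear: $\varphi([g])=0$ forces $\widetilde{g}$, and therefore $g$, to vanish on some $(c,+\infty)$, i.e. $[g]=0$. Hence $\varphi$ identifies $\mathcal{H}_\mathcal{A}$ with a subring of the $\mathcal{C}^\infty$-ring $R$, and since $\mathcal{H}_\mathcal{A}$ is a field by the first part, this subring is a subfield. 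I expect the one genuinely delicate point to be this second assertion — not any single computation, but the conceptual step of coercing the merely eventually-smooth representatives (in particular the reciprocals produced above) into honest global elements of a standard $\mathcal{C}^\infty$-ring, which is precisely what Proposition \ref{3} is designed to supply.
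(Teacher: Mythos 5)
Your proposal is correct and follows essentially the same route as the paper: the Hardy field part via Proposition \ref{4} (eventual nonvanishing gives inverses, plus closure under differentiation), and the embedding part via Proposition \ref{3} into the quotient of $\mathcal{C}^\infty(\mathbb{R})$ by the ideal of eventually zero smooth functions, with the same well-definedness, homomorphism and injectivity checks. The only difference is presentational: where you invoke the standard fact that a quotient of a $\mathcal{C}^\infty$-ring by a ring ideal is again a $\mathcal{C}^\infty$-ring, the paper verifies this explicitly using Hadamard's lemma.
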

\begin{proof}
Recall that the commutative ring with unity $\mathcal{H}$ of the germs at $+\infty$ of definable unary functions is a Hardy field (Proposition \ref{4}). For the first part of the theorem, it thus suffices to show that $\mathcal{H}_{\mathcal{A}}$ is a subfield of $\mathcal{H}$ (for which it is sufficient to guarantee that each nonzero element in $\mathcal{H}_{\mathcal{A}}$ is a unit), and $\mathcal{H}_{\mathcal{A}}$ is closed under differentiation. Indeed, any non eventually zero definable $\mathcal{C}^\infty$ function $f$ is eventually nonzero by virtue of Proposition \ref{4}. Hence, there exists $c\in \mathbb{R}$ such that $f$ does not vanish on $(c,+\infty)$. Let $g\colon (c,+\infty)\to \mathbb{R}$ be the function given by $g\mathrel{\mathop:}=1/f$. Clearly, $g$ is definable and $\mathcal{C}^\infty$ (therefore $[g]\in \mathcal{H}_{\mathcal{A}}$), and the equality $[f][g]=[1]$ holds. Now, if $f\colon (c,+\infty)\to \mathbb{R}$ is definable $\mathcal{C}^\infty$ for some $c\in \mathbb{R}$, then $f'\colon (c,+\infty)\to \mathbb{R}$ defined as $x\mapsto f'(x)\mathrel{\mathop:}=\lim_{t\to 0}(f(x+t)-f(x))t^{-1}$ is definable and $\mathcal{C}^\infty$ as well, thus $[f']\in \mathcal{H}_{\mathcal{A}}$.

With regard to the second part, let $I$ denote the set of all $\mathcal{C}^\infty$ functions from $\mathbb{R}$ to $\mathbb{R}$ which are eventually zero. Equipped with the operations induced by those on $\mathcal{C}^\infty(\mathbb{R})$, $I$ is an ideal of the ring $\mathcal{C}^\infty(\mathbb{R})$. (Indeed, observe that the identically zero function is eventually zero and $\mathcal{C}^\infty$; and, the sum of two eventually zero $\mathcal{C}^\infty$ functions is an eventually zero $\mathcal{C}^\infty$ function as well as the multiplication of an eventually $\mathcal{C}^\infty$ function by a $\mathcal{C}^\infty$ function.) We may endow the quotient set $\mathcal{C}^\infty(\mathbb{R})/I$ with a $\mathcal{C}^\infty$-ring structure as follows. For each $f\in \mathcal{C}^\infty(\mathbb{R}^n)$, let $\Phi_f\colon (\mathcal{C}^\infty(\mathbb{R})/I)^n\to \mathcal{C}^\infty(\mathbb{R})/I$ be the map defined as 
$$
\Phi_f(c_1+I,\ldots,c_n+I)\mathrel{\mathop:}=f(c_1,\ldots. c_n)+I,\ c_i\in \mathcal{C}^\infty(\mathbb{R}).
$$ 
To see that $\Phi_f$ does not depend on the representatives $c_1,\ldots,c_n$, consider $c'_1,\ldots,c'_n\in \mathcal{C}^\infty(\mathbb{R})$ so that $c_i+I=c'_i+I$ for each $i=1,\ldots,n$. Hadamard's lemma ensures the existence of $\mathcal{C}^\infty$ functions $g_i\colon \mathbb{R}^{2n}\to \mathbb{R}$ ($i=1,\ldots,n$) with 
$$
f(y_1,\ldots,y_n)-f(x_1,\ldots,x_n)=\sum_{i=1}^n(y_i-x_i)g_i(x_1,\ldots,x_n,y_1,\ldots,y_n),
$$
for all $x_i,y_i\in \mathbb{R}$. The equality of functions
$$
f(c'_1,\ldots,c'_n)-f(c_1,\ldots,c_n) =\sum_{i=1}^n(c'_i-c_i)g_i(c'_1,\ldots,c'_n,c_1,\ldots,c_n)
$$
thus follows. Because $c'_i-c_i\in I$ and $I$ is an ideal, the right-hand side of the above equality lies in $I$. So, the values of $\Phi_f$ at the tuples $(c_1+I,\ldots,c_n+I)$ and $(c'_1+I,\ldots,c'_n+I)$ are the same. This concludes the well definition of $\Phi_f$. It is not hard to see that the sequence $(\Phi_f)_f$ satisfies the defining conditions of a $\mathcal{C}^\infty$-ring, in particular, $(\mathcal{C}^\infty(\mathbb{R})/I,(\Phi_f)_f)$ is a commutative ring with unity. 

Now, we take $T\colon \mathcal{H}_{\mathcal{A}}\to \mathcal{C}^\infty(\mathbb{R})/I$ to be the map given by the rule
$$
[f]\mapsto g+I,
$$
where $g\colon \mathbb{R}\to \mathbb{R}$ is a $\mathcal{C}^\infty$ function (not necessarily definable) with $g\in [f]$. Let $f_1$ and $f_2$ be definable $\mathcal{C}^\infty$ unary functions with $[f_1]=[f_2]$. By Proposition \ref{3}, there exist total $\mathcal{C}^\infty$ unary functions $g_1$ and $g_2$ and real numbers $c_1$ and $c_2$ satisfying $g_1|_{(c_1,+\infty)}=f_1$ and $g_2|_{(c_2,+\infty)}=f_2$. Since $f_1$ and $f_2$ are eventually the same, $g_1-g_2\in \mathcal{C}^\infty(\mathbb{R})$ is eventually zero. Therefore, $g_1-g_2$ lies in $I$. This shows that $T$ is well defined. 

In order to prove that $T$ is an injective ring with unity homomorphism, let $[f_1]$ and $[f_2]$ be germs in $\mathcal{H}_{\mathcal{A}}$ and consider functions $g_1$, $g_2$ and $g$ in $\mathcal{C}^\infty(\mathbb{R})$ with $g_1\in [f_1]$, $g_2\in [f_2]$ and $g\in [f_1+f_2]$. In view of the definition of germ at $+\infty$, there exists a real number $c$ such that $g=f_1+f_2=g_1+g_2$ holds on $(c,+\infty)$, thereby $g-(g_1+g_2)$ belongs to $I$. Therefore, we have $T([f_1]+[f_2])=T([f_1+f_2])=g+I=g_1+g_2+I=T([f_1])+T([f_2])$. Similarly, $T([f_1][f_2])=T([f_1])T([f_2])$. Also, from the construction of $T$, it follows immediately that $T([1])=1+I$, where $1$ denotes, as an abuse of notation, the constant total function $1\in \mathbb{R}$. Finally, if $T([f])=0+I$, where $0$ is the zero function, then by the definition of $T$ we have $[0]=[f]$.

Thus, $T$ is an isomorphism from $\mathcal{H}_{\mathcal{A}}$ onto $\text{Im}(T)$. From the first part of the theorem, it follows that $\text{Im}(T)$ is a subfield of the $\mathcal{C}^\infty$-ring $\mathcal{C}^\infty(\mathbb{R})/I$. 
\end{proof}

In what follows we show that the conclusion of Theorem \ref{5} still holds under weaker assumptions, namely the order minimality is imposed only on the zero-sets of functions definable in $\mathcal{A}$.

It is readily seen that the zero-sets of all definable functions from $\mathbb{R}^n$ to $\mathbb{R}$ ($n\geq 1$) form a weak structure, denoted $\mathcal{Z}=(\mathcal{Z}_n)_{n\geq 1}$.

\begin{theorem}\label{6}
Suppose the weak structure $\mathcal{Z}$ is o-minimal. Then  the commutative ring with unity $\mathcal{H}_\mathcal{A}$ of germs at $+\infty$ of definable $\mathcal{C}^\infty$ unary functions is a Hardy field, and is isomorphic to a subfield of a $\mathcal{C}^\infty$-ring. 
\end{theorem}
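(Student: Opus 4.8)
The plan is to retrace the proof of Theorem \ref{5}, isolating the single place where the full o-minimality of $\mathcal{A}$ was used and replacing it by a direct consequence of the o-minimality of $\mathcal{Z}$. In Theorem \ref{5}, o-minimality entered only through Proposition \ref{4}, and in fact only through its clause (3): every definable unary function is eventually zero or eventually nonzero. That dichotomy is precisely what allowed nonzero germs to be inverted. So the first, and essentially only new, step is to derive this dichotomy from the weaker hypothesis that the elements of $\mathcal{Z}_1$ are finite unions of points and open intervals. Note that the hypothesis here constrains only the zero-sets of definable functions, not all definable subsets of $\mathbb{R}$, so Proposition \ref{4} is no longer available and the dichotomy must be obtained by hand.

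First I would prove the following: if $\mathcal{Z}$ is o-minimal, then every definable $\mathcal{C}^\infty$ unary function is eventually zero or eventually nonzero. The mild technical point is that the relevant function $f\colon (c,+\infty)\to\mathbb{R}$ is only partial, so its zero-set is not literally an element of $\mathcal{Z}_1$ (whose members are zero-sets of \emph{total} definable functions). I would therefore first pass to a total definable function with the same zero-set near $+\infty$, for instance $G\colon\mathbb{R}\to\mathbb{R}$ with $G(x)=f(x)$ for $x>c$ and $G(x)=1$ for $x\leq c$; its graph is definable, so $G$ is a definable total function and $Z(G)=\{x>c:f(x)=0\}\in\mathcal{Z}_1$. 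By o-minimality of $\mathcal{Z}$, the set $Z(G)$ is a finite union of points and open intervals. Hence either one of its connected components is a ray $(a,+\infty)$, in which case $f$ is eventually zero, or every component is bounded above, so $Z(G)$ is bounded above and $f$ is eventually nonzero. This is exactly clause (3) of Proposition \ref{4}, restricted to smooth definable functions, which is all the subsequent argument requires.

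With the dichotomy in hand, I would argue directly that $\mathcal{H}_\mathcal{A}$ is a Hardy field, without routing through the ambient ring $\mathcal{H}$ of Proposition \ref{4} (which is now unavailable). It is a subring of the germ ring $Q$, since sums and products of definable $\mathcal{C}^\infty$ functions are again definable and $\mathcal{C}^\infty$, and it contains $[1]$. Any nonzero germ $[f]$ corresponds to a function that is not eventually zero, hence eventually nonzero by the dichotomy; thus $1/f$ is definable and $\mathcal{C}^\infty$ on some $(c,+\infty)$, and $[1/f]\in\mathcal{H}_\mathcal{A}$ is the required inverse, so $\mathcal{H}_\mathcal{A}$ is a field. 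Closure under differentiation is immediate, since $f'$ is again definable and $\mathcal{C}^\infty$. Therefore $\mathcal{H}_\mathcal{A}$ is a subfield of $Q$ closed under differentiation, i.e. a Hardy field.

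Finally, the second assertion, that $\mathcal{H}_\mathcal{A}$ embeds as a subfield of a $\mathcal{C}^\infty$-ring, transfers verbatim from Theorem \ref{5}. The construction of the $\mathcal{C}^\infty$-ring $\mathcal{C}^\infty(\mathbb{R})/I$, with $I$ the ideal of eventually-zero smooth functions, and of the map $T([f])=g+I$ using a total smooth representative $g$ furnished by Proposition \ref{3}, together with the verifications (via Hadamard's lemma) that $T$ is a well-defined injective unital ring homomorphism, nowhere used the o-minimality of $\mathcal{A}$; they rely only on Proposition \ref{3}, Hadamard's lemma, and the field structure just established. The main obstacle is thus concentrated in the first step: correctly reducing the zero-set of a partial function to an element of $\mathcal{Z}_1$ so that the weak o-minimality hypothesis can be brought to bear.
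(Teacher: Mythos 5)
Your proposal is correct and follows essentially the same route as the paper: the paper likewise extends the partial function to a total definable one (by zero rather than by $1$), applies the o-minimality of $\mathcal{Z}$ to its zero-set to conclude that a non--eventually-zero definable $\mathcal{C}^\infty$ function is eventually nonzero (hence invertible in $\mathcal{H}_{\mathcal{A}}$), and transfers the embedding into $\mathcal{C}^\infty(\mathbb{R})/I$ verbatim from Theorem \ref{5}. Your write-up is slightly more explicit in observing that the ambient Hardy field $\mathcal{H}$ of Proposition \ref{4} is no longer available and in keeping the zero-set equal to $\{x>c : f(x)=0\}$ via the extension by $1$, but these are cosmetic differences, not a different argument.
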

\begin{proof}
The proof of this theorem is entirely similar to that of Theorem \ref{5}, except for the assertion that $\mathcal{H}_{\mathcal{A}}$, the set of all germs at $+\infty$ of definable $\mathcal{C}^\infty$ unary functions, is a Hardy field. Such a conclusion is achieved if we show that every nonzero element in $\mathcal{H}_{\mathcal{A}}$ is a unit. For this, let $f\colon (c,+\infty)\to \mathbb{R}$ be a $\mathcal{C}^\infty$ definable unary function, which is not eventually zero. The extension $\widetilde{f}\colon \mathbb{R}\to \mathbb{R}$ of $f$ by zero, that is, $f\mathrel{\mathop:}=f$ on $(c,+\infty)$, and $f\mathrel{\mathop:}=0$ on $(-\infty,c]$, is then definable. (In the case $c=-\infty$, we take $\widetilde{f}$ to be the function $f$ itself.) Consequently, $\widetilde{f}^{-1}(0)\in \mathcal{Z}_1$. Since $\widetilde{f}^{-1}(0)$ is a finite union of connected components of the real line and $\widetilde{f}$ is not eventually zero, none of these connected components is of the form $(d,+\infty)$, with $d\in \mathbb{R}$, nor $\widetilde{f}$ satisfies: for all $x\in \mathbb{R}$ there exists $y>x$ at which $\widetilde{f}$ vanishes. This amounts to the following situation: $f$ is either eventually positive or eventually negative. Therefore, as in the proof of Theorem \ref{5}, it has a multiplicative inverse in $\mathcal{H}_{\mathcal{A}}$. 
\end{proof}

\section{Concluding remarks and future works}

In the paper \cite{vandendries-matthias-}, which has inspired this modest contribution, the authors bring to one's attention a triple of structures that are linked by the notion of H-field  which provides a common framework for these structures. They present a  model-theoretic analysis of the category of
H-fields, e.g. the theory of H-closed fields is model complete,  and relate these results with the original tripod: Hardy fields, surreal numbers and transseries.

In the same vein, we intend to analyze the class of $\mathcal{C}^\infty$-fields under a model-theoretic perspective since we believe that there are clues that this class should satisfy many interesting logical properties:
\begin{itemize}
    \item under real algebra perspective: as already mentioned, every $\mathcal{C}^\infty$-field is ($\mathcal{C}^\infty$)- real closed;
    \item under differential algebra perspective:  since every $\mathcal{C}^\infty$-ring $A$ is isomorphic to quotient of a free $\mathcal{C}^\infty$-ring on some set $X$ by an ideal $I$, $A \cong \mathcal{C}^\infty(\mathbb{R}^X)/I$, it encodes many algebraic derivations.
\end{itemize}

These observations suggest the existence of a relation between the triple in \cite{vandendries-matthias-} and the one here presented that is even stronger than just having a common vertex: Hardy fields.
In particular,  is there a general first-order theory that includes naturally $\mathcal{C}^\infty$-fields and the H-fields?

\appendix
\section{Extending smooth real functions}

We follow closely Chapter 13 and Appendix C in \cite{tu}. 

Recall from elementary calculus that the function $f\colon \mathbb{R}\to \mathbb{R}$, defined as
\[
    f(t)\mathrel{\mathop:}=
\begin{cases}
    e^{-\frac{1}{t}},& \text{if } t> 0\\
    0,              & \text{if } t\leq 0
\end{cases},
\]
is $\mathcal{C}^\infty$. Let $g\colon \mathbb{R}\to \mathbb{R}$ be 
$$
g(t)\mathrel{\mathop:}=\frac{f(t)}{f(t)+f(1-t)}.
$$ 
So $g$ is also $\mathcal{C}^\infty$. For any two positive real numbers $a<b$, the $\mathcal{C}^\infty$ function $\rho\colon \mathbb{R}\to \mathbb{R}$ given by 
$$
\rho(x)\mathrel{\mathop:}=1-g\left(\frac{(x-q)^2-a^2}{b^2-a^2}\right),
$$
where $q\in \mathbb{R}$, is called a \textit{$\mathcal{C}^\infty$ bump function} at $q$. A short description of $\rho$ is made as follows: $\rho$ vanishes on $\mathbb{R}\setminus (q-b,q+b)$, equals $1$ on $[q-a,q+a]$, is strictly increasing on $[q-b,q-a]$, and is strictly decreasing on $[q+a,q+b]$.  

%A $\mathcal{C}^\infty$ bump function $\rho$ at a point $q\in \mathbb{R}$ is said to be \textit{supported in an open set} $U$ containing $q$ if $\rho$ is identically $1$ on a neighborhood of $q$ and the \textit{support} of $\rho$, $\text{supp}(\rho)\mathrel{\mathop:}=\overline{\{x\in\mathbb{R}\,:\, \rho(x)\neq 0\}}$, is included in $U$.

Given a map $f\colon X\to Y$ defined on a topological space $X$, we denote by $\text{supp}\,f$ the set $\overline{\{x\in X\,:\, f(x)\neq 0\}}$ and call it the \textit{support of $f$}.

\begin{lemma}[Partitions of unity]\label{1}
Let $\{U_\alpha\}_{\alpha\in \Lambda}$ be an open cover of $\mathbb{R}$. Then, there is a countable family $\{\varphi_k\}_{k=1}^\infty$ of $\mathcal{C}^\infty$ functions $\varphi_k\colon \mathbb{R}\to \mathbb{R}$ satisfying the conditions 
\begin{enumerate}
    \item[(1)] $\{\emph{supp}\,\varphi_k\}_{k=1}^\infty$ is \emph{locally finite}, that is, any real number has a neighborhood that intersects only finitely many $\emph{supp}\,\varphi_k$;
    \item[(2)] each $\emph{supp}\, \varphi_k$ is compact;
    \item[(3)] for each $k$ there exists an $\alpha\in\Lambda$ with $\emph{supp}\,\varphi_k\subseteq U_\alpha$;
    \item[(4)] $\sum_{k=1}^\infty\varphi_k=1$.
\end{enumerate}
Such a collection $\{\varphi_k\}_k$ is called a \emph{$\mathcal{C}^\infty$ partition of unity subordinate to the cover} $\{U_\alpha\}_{\alpha\in \Lambda}$.
\end{lemma}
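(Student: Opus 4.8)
The plan is to follow the classical construction of a smooth partition of unity, specialized to the line, where the compact exhaustion is completely explicit. First I would fix the nested exhaustion of $\mathbb{R}$ by the open intervals $O_i := (-i, i)$ for $i \geq 1$, with the convention $O_j := \emptyset$ for $j \leq 0$, so that each closure $\overline{O_i} = [-i, i]$ is compact, $\overline{O_i} \subseteq O_{i+1}$, and $\bigcup_{i \geq 1} O_i = \mathbb{R}$. For each $i$ the ``shell'' $A_i := \overline{O_i} \setminus O_{i-1}$ is compact, the shells cover $\mathbb{R}$, and the open set $W_i := O_{i+1} \setminus \overline{O_{i-2}}$ contains $A_i$. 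Crucially, the family $\{W_i\}_i$ is locally finite, since any bounded interval meets only finitely many of the annular sets $W_i$.

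Next I would produce the local bump functions. For each point $p \in \mathbb{R}$ there is some $\alpha$ with $p \in U_\alpha$; choosing the index $i$ with $p \in A_i$ and using the explicit $\mathcal{C}^\infty$ bump function $\rho$ built above (with center $q := p$ and with the radius $b$ taken small enough that the open interval about $p$ lies inside the open set $U_\alpha \cap W_i$), I would obtain a smooth $\rho_p \colon \mathbb{R} \to \mathbb{R}$ that equals $1$ on a neighborhood of $p$ and has compact support contained in $U_\alpha \cap W_i$. Applying compactness of $A_i$ to the open cover given by these ``equals $1$'' neighborhoods, I extract finitely many such bumps per shell; gathering them over all $i$ yields a countable family $\{\psi_j\}_j$ of $\mathcal{C}^\infty$ functions, each with compact support contained in some $U_\alpha$, whose supports form a locally finite family because each support sits inside some $W_i$.

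Then I would verify that $\psi := \sum_j \psi_j$ is a well-defined, strictly positive $\mathcal{C}^\infty$ function: local finiteness of the supports makes the sum finite on a neighborhood of each point, hence smooth, and since the ``equals $1$'' regions of the $\psi_j$ together cover all of $\mathbb{R}$, at least one summand is positive at each point, giving $\psi > 0$ everywhere. Finally, setting $\varphi_k := \psi_k / \psi$ delivers the desired collection: conditions (2) and (3) are inherited from the $\psi_k$, condition (1) holds because $\mathrm{supp}\,\varphi_k \subseteq \mathrm{supp}\,\psi_k$, and condition (4) is immediate from the definition of $\psi$.

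The main obstacle is organizational rather than computational: arranging the bump functions so that their supports form a locally finite family while still covering $\mathbb{R}$. This is precisely where the shell decomposition $A_i \subseteq W_i$ earns its keep---without confining each bump to a single $W_i$, the infinite sum could fail to be locally finite, and then both the smoothness of $\psi$ and the legitimacy of the normalization step would break down.
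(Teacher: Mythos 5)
Your proof is correct and follows essentially the same strategy as the paper's: cover $\mathbb{R}$ by compact pieces (your shells $A_i$ versus the paper's unit intervals $[m,m+1]$), confine each bump's support to a locally finite family of open ``windows'' (your $W_i = O_{i+1}\setminus\overline{O_{i-2}}$ versus the paper's $(m-1,m+2)$), extract finitely many bumps per piece by compactness, and normalize the everywhere-positive sum. The only differences are cosmetic choices of indexing, so nothing further is needed.
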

\begin{proof}
Our aim is to find, for each integer $m$, finitely many bounded open intervals $W_1^m,\ldots, W_{l(m)}^m$ and finitely many $\mathcal{C}^\infty$ bump functions $\phi_1^m,\ldots,\phi_{l(m)}^m$ such that 
\begin{enumerate}
    \item[(a)] $W_1^m,\ldots,W_{l(m)}^m$ cover $[m,m+1]$;
    \item[(b)] $\phi_j^m>0$ on $W_j^m$, and $\phi_j^m=0$ on $\mathbb{R}\setminus W_j^m$ for $j=1,\ldots,l(m)$;
    \item[(c)] $\text{supp}\,\phi_j^m\subseteq U_{\alpha_{mj}}\cap (m-1,m+2)$ for some $\alpha_{mj}\in \Lambda$;
    \item[(d)] $\text{supp}\,\phi_j^m$ is compact.
\end{enumerate}
Note that the collection $\{\text{supp}\,\phi_j^m\}$ of the supports of the functions $\phi_j^m$ is locally finite, since any $[m,m+1]$ does not intersect $\text{supp}\,\phi_j^n$, for all $n\geq m+3$, $n\leq m-3$, and $j=1,\ldots,l(n)$. Therefore, the sum $\phi\mathrel{\mathop:}=\sum_{m,j}\phi^m_j$ is well defined as a smooth function. Moreover, any real number $x$ lies in some $[m,m+1]$, and by (a) in some open interval $W_j^m$ as well. Hence, $\phi_j^m(x)>0$. This shows that the $\mathcal{C}^\infty$ function $\phi$ is everywhere positive. In view of this, for each integer $m$ and each $j=1,\ldots,l(m)$ the function $\varphi_j^m\colon \mathbb{R}\to \mathbb{R}$, given by 
$$
\varphi_j^m\mathrel{\mathop:}=\frac{\phi_j^m}{\phi},
$$ 
is well defined. %By virtue of (b), each $\varphi_j^m$ is eventually zero. 
A routine argument shows that $\{\varphi_j^m\}_{m,j}$ is a family of $\mathcal{C}^\infty$ functions satisfying the conditions (1)-(4). 

It remains to prove the statements (a)-(d). Indeed, for each point $x\in[m,m+1]$, where $m$ is an arbitrary integer, there is an $\alpha\in \Lambda$ with $x\in U_\alpha\cap (m-1,m+2)$, by recalling that $\{U_\alpha\}_\alpha$ covers $\mathbb{R}$. Let $\phi_x$ be a $\mathcal{C}^\infty$ bump function at $x$ which is positive on a bounded open interval $W_x$ centered at $x$ whose closure is included in the open set $U_\alpha\cap (m-1,m+2)$. The set $\text{supp}\,\phi_x$ is then included in $[m-1,m+2]$, therefore $\text{supp}\,\phi_x$ is compact. Since $\{W_x\,:\, x\in [m,m+1]\}$ is an open cover of the compact set $[m,m+1]$, there are finitely many intervals $W_{x_1},\ldots,W_{x_{l(m)}}$, with associated $\mathcal{C}^\infty$ bump functions $\phi_{x_1},\ldots,\phi_{x_{l(m)}}$, which cover $[m,m+1]$. \end{proof}

A function $f\colon A\to \mathbb{R}$, defined on an arbitrary set $A\subseteq \mathbb{R}$, is \textit{of class $\mathcal{C}^\infty$} if for every point $x$ in $A$ there exist an open $U$ containing $x$ and a $\mathcal{C}^\infty$ function $\widehat{f}\colon U\to \mathbb{R}$ such that $\widehat{f}=f$ in $U\cap A$.

\begin{lemma}[Tietze extension theorem]\label{2}
Let $F$ be any closed subset of $\mathbb{R}$ and let $f\colon F\to \mathbb{R}$ be a $\mathcal{C}^\infty$ function. Then there exists a $\mathcal{C}^\infty$ function $\widetilde{f}\colon \mathbb{R}\to \mathbb{R}$ such that $\widetilde{f}|_F=f$.
\end{lemma}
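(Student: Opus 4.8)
The plan is to combine the local smoothness of $f$ on $F$ with a partition of unity subordinate to a well-chosen open cover of $\mathbb{R}$, following the classical pattern for Tietze-type extension results. First I would use that $F$ is closed, so that $\mathbb{R}\setminus F$ is open. By the definition of a $\mathcal{C}^\infty$ function on the subset $F$, for each $x\in F$ there is an open neighborhood $U_x\ni x$ together with a genuine $\mathcal{C}^\infty$ function $\widehat{f}_x\colon U_x\to\mathbb{R}$ agreeing with $f$ on $U_x\cap F$. The family $\{U_x\}_{x\in F}\cup\{\mathbb{R}\setminus F\}$ is then an open cover of $\mathbb{R}$.

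Next I would invoke Lemma \ref{1} to produce a $\mathcal{C}^\infty$ partition of unity $\{\varphi_k\}_{k\geq 1}$ subordinate to this cover. For each $k$, condition (3) of Lemma \ref{1} supplies a cover element containing $\text{supp}\,\varphi_k$; if that element is $\mathbb{R}\setminus F$ I set $h_k\mathrel{\mathop:}=0$, and otherwise I pick an associated $\widehat{f}_x$ and set $h_k\mathrel{\mathop:}=\widehat{f}_x$ on the corresponding $U_x$. In either case the product $\varphi_k h_k$ is defined and $\mathcal{C}^\infty$ on the open set carrying $\text{supp}\,\varphi_k$ and vanishes outside the compact set $\text{supp}\,\varphi_k$ (condition (2)), so it extends by zero to a $\mathcal{C}^\infty$ function on all of $\mathbb{R}$, which I still denote $\varphi_k h_k$. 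I would then define
$$
\widetilde{f}\mathrel{\mathop:}=\sum_{k=1}^\infty \varphi_k h_k.
$$

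Smoothness of $\widetilde{f}$ follows from the local finiteness of $\{\text{supp}\,\varphi_k\}$ (condition (1)): near any point only finitely many summands are nonzero, so the sum is locally a finite sum of $\mathcal{C}^\infty$ functions and hence $\mathcal{C}^\infty$. It then remains to verify $\widetilde{f}|_F=f$. I would fix $x\in F$ and observe that for any $k$ with $\varphi_k(x)\neq 0$ we have $x\in\text{supp}\,\varphi_k$, so the associated cover element cannot be $\mathbb{R}\setminus F$; hence it is some $U_{x'}$, and since $x\in U_{x'}\cap F$ we get $h_k(x)=\widehat{f}_{x'}(x)=f(x)$. Using condition (4),
$$
\widetilde{f}(x)=\sum_{k}\varphi_k(x)h_k(x)=f(x)\sum_k\varphi_k(x)=f(x).
$$

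I expect no serious obstacle: the argument is the standard one and every ingredient is already at hand. The only points demanding care are the justification that each product $\varphi_k h_k$ genuinely extends to a global $\mathcal{C}^\infty$ function by zero — which rests on $\text{supp}\,\varphi_k$ being compact and contained in the open set on which $h_k$ is defined — and the bookkeeping ensuring that, at a point of $F$, only the terms whose $h_k$ comes from some $U_{x'}$ can contribute.
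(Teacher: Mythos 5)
Your proof is correct and takes essentially the same approach as the paper's: the same open cover $\{U_x\}_{x\in F}\cup\{\mathbb{R}\setminus F\}$, a $\mathcal{C}^\infty$ partition of unity from Lemma \ref{1}, the same weighted sum $\sum \varphi_k h_k$, and the same verification on $F$ using that no term supported in $\mathbb{R}\setminus F$ can contribute at a point of $F$. The only difference is bookkeeping --- you keep the countable index $k$ and assign a cover element to each $\varphi_k$, whereas the paper reindexes the partition by the cover's index set; if anything your version handles the subordination and the extension-by-zero of each product more carefully.
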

\begin{proof}
By hypothesis, for each $x\in F$ there exist an open subset $U_x$ of the real line and a $\mathcal{C}^\infty$ function $\widehat{f}_x\colon U_x\to \mathbb{R}$ such that $\widehat{f}_x=f$ on $U_x\cap F$. Let $\{\varphi_k\}_{k=1}^\infty$ be a smooth partition of unity subordinate to the open cover $\{U_x\}_{x\in F}\cup \{\mathbb{R}\setminus F\}$ of $\mathbb{R}$. Let us reindex the partition of unity in order to have the same index set as the cover, which gives $\{\varphi_x\}_{x\in F}\cup \{\varphi_0\}$, with $\text{supp}\,\varphi_x\subseteq U_x$ and $\text{supp}\,\varphi_0\subseteq \mathbb{R}\setminus F$. (This can be done by adding to the original family the zero functions.) Now, we extend each $\widehat{f}_x$ to $\mathbb{R}$ by zero, so the function $\varphi_x\widehat{f}_x$ is smooth. Thus we can define $\widetilde{f}\colon \mathbb{R}\to \mathbb{R}$ by 
$$
\widetilde{f}(y)\mathrel{\mathop:}=\sum_{x\in F}\varphi_x(y)\widehat{f}_x(y).
$$
Because the collection $\{\text{supp}\,\varphi_x\}_{x\in F}$ is locally finite, this sum is finite in a neighborhood of every point in $\mathbb{R}$, and therefore defines a $\mathcal{C}^\infty$ function. Note that if $y\in F$ then $\varphi_0(y)=0$, and $f_x(y)=f(y)$ for each $x$ such that $\varphi_x(y)\neq 0$. Then, for any $y\in F$, $\widetilde{f}(y)=\sum_{x\in F}\varphi_x(y)\widetilde{f}_x(y)=\sum_x\varphi_x(y)f(y)=f(y)(\sum_{x\in F}\varphi_x(y)+\varphi_0(y))=f(y)$, \textit{i.e.}, $\widetilde{f}$ is indeed an extension of $f$.
\end{proof}

\noindent\textit{Proof of Proposition \ref{3}}. 
Set $\widehat{g}\mathrel{\mathop:}=g|_{[c,+\infty)}$. Then $\widehat{g}$ is of class $\mathcal{C}^\infty$. Tietze extension theorem thus gives a function $\widetilde{g}\colon \mathbb{R}\to \mathbb{R}$ of class $\mathcal{C}^\infty$ extending $\widehat{g}$ and the proof is finished.

%%%%%%%%%%%% Bibliography %%%%%%%%%%%%

%%%%%%%%%%%% Authors information %%%%%%%%%%%%

\

%\authorname{Rodrigo Figueiredo}
%\address{Department of Mathematics\\
%University of S\~ao Paulo (USP)\\
%Rua do Mat\~ao 1010, CEP 05508-090, S\~ao Paulo, SP, Brazil} 
%\email{rodrigof@ime.usp.br}

%\authorname{Hugo Luiz Mariano}
%\address{Department of Mathematics\\
%University of S\~ao Paulo (USP)\\
%Rua do Mat\~ao 1010, CEP 05508-090, S\~ao Paulo, SP, Brazil} 
%\email{hugomar@ime.usp.br}

\end{document}